\documentclass{article}
\usepackage{graphicx} 
\usepackage{amsmath,times,hyperref, multicol}
\usepackage{amsthm}
\usepackage{amssymb}
\usepackage{amsfonts}
\usepackage{latexsym}
\usepackage{graphicx}
\usepackage[usenames,dvipsnames]{color} 
\usepackage{enumitem}
\usepackage{halloweenmath}
\setlist[itemize]{leftmargin=*}
\setlist[enumerate]{leftmargin=*}
\usepackage[font={small}]{caption}
\usepackage{subcaption}
\usepackage{cite}
\usepackage{comment}
\usepackage{mathtools}
\mathtoolsset{showonlyrefs}

\newtheorem{Theorem}{Theorem}[section]
\newtheorem{Lemma}[Theorem]{Lemma}

\newtheorem{Remark}[Theorem]{Remark}
\newtheorem{Conjecture}[Theorem]{Conjecture}

\title{$q$-Numerical Ranges and Spectral Sets}
\author{
Ryan O'Loughlin\thanks{Department of Mathematics and Statistics, University of Reading,
Reading, England\\ \texttt{r.d.oloughlin@reading.ac.uk}}
\and
Jyoti Rani\thanks{Indian Institute of Science Education and Research (IISER) Mohali, Knowledge City, S.A.S Nagar, Punjab 140306\\ \texttt{jyotirani@iisermohali.ac.in}}
}
\date{}

\begin{document}

\maketitle

\begin{abstract}
We study spectral constants for convex domains $\Omega$ containing the spectrum of an operator. We extend the Crouzeix–Palencia framework by obtaining bounds depending on a parameter $\gamma$ and relating these bounds to geometric properties of $\Omega$ and the numerical range $W(A)$. We generalise the proof that the numerical range is a $1+\sqrt{2}$-spectral set to scaled $q$-numerical ranges. We also propose a generalisation of Crouzeix's Conjecture in the context of $q$-numerical ranges.
\end{abstract}

\medskip

\noindent\textbf{Keywords:} numerical range, spectral set, Crouzeix conjecture, $q$-numerical range, spectral constants

\medskip

\noindent\textbf{MSC (2020):} 47A12, 47A25, 47A60

\section{Introduction}

Let $A$ be a bounded operator on a separable Hilbert space $\mathcal{H}$ and $\Omega \subseteq \mathbb{C}$ be a bounded connected set such that the spectrum is contained in the closure of $ \Omega $. If for all $f \in {\cal A} ( \Omega )$ (holomorphic functions on the interior of $\Omega$ which are continuous on the boundary of $\Omega$) we have
$$
\| f(A) \| \leq c_{\Omega} \sup_{z \in \Omega} |f(z)|
$$
then $\Omega$ is said to be a \textit{$c_{\Omega}$-spectral set}. In this case we call $c_{\Omega}$ the \textit{spectral constant}.

One of the most important results in operator theory is von Neumann's inequality which states that when $\|A\| \leq 1$ the unit disk is a 1-spectral set. Crouzeix's Conjecture claims that the numerical range, $W(A) = \{ \langle Ax,x \rangle :  \|x\| = 1 \}$ is a 2-spectral set. Thus far, the best known spectral constant for the numerical range is $1+ \sqrt{2}$ \cite{1root2, ransford2018remarks}. Crouzeix's Conjecture has gained a lot of traction from the Functional and Complex Analysis communities \cite{o2024crouzeix, CC11.08, CC2matrix, CC3by3, CCgorkin, CCincreasing, CCLi, CCpalenciaextension, CCnilpotent, CCnum, malman2024double, 1root2, CCpalenciaextension, mashreghi2025schwarz}. The main result of this paper provides spectral constant bounds for the scaled
$q$-numerical range $\Omega_q = W_q(A)/q$. These bounds depend on the parameter
$q$ and on a quantity measuring the non-normality of $A$. When $q=1$ the result recovers the
$1+\sqrt{2}$ spectral constant bound for the numerical range obtained in
\cite{1root2}.

In Section \ref{2} we extend previous results in \cite{CCpalenciaextension,crouzeix2019spectral}, and deduce a spectral constant for $\Omega$ which depends on a parameter $\gamma$ coming from a positivity criterion on the double layer potential. In Section \ref{3} we obtain explicit spectral constant bounds for operators $A$ only dependent on the geometry of $\Omega$ and $W(A)$. In Section \ref{4} we specialise this to the case when $\Omega = \Omega_q$ is a scaled $q$-numerical range,
$$
\Omega_q = \frac{W_q(A)}{q}, \quad W_q(A) = \{ \langle Ax,y \rangle : \|x\| = \|y\| = 1, \, \langle x,y \rangle = q \}, 
$$
where $q$ is a complex number such that $ 0< |q| \leq 1$ to deduce spectral constants for $\Omega_q$, which generalises the $1 + \sqrt{2}$ spectral constant result for $ W_1(A)$ in \cite{1root2}. Finally, we conjecture an optimal spectral constant for $\Omega_q$, which generalises the conjectured 2-spectral constant for $W(A)$.

Throughout, $A$ denotes a bounded operator on a separable Hilbert space $\mathcal{H}$. Unless stated otherwise, $\Omega \subseteq \mathbb{C}$ denotes a smoothly bounded convex domain containing the spectrum of $A$ in its interior. 
This restriction entails no loss of generality since $X \subseteq \mathbb{C}$ is a $\kappa$-spectral set if and only if every smoothly bounded neighbourhood $\Omega$ of the spectrum of $A$ containing $\overline{X}$ is also a $\kappa$-spectral set \cite{schwenninger2025double}.

\section{Spectral Constants}\label{2}
Recall ${\cal A} ( \Omega )$ denotes the holomorphic functions on the interior of $\Omega$ which are continuos on the boundary of $\Omega$. For any function $f \in {\cal A} ( \Omega ) $, we define $f(A)$ via the Cauchy integral formula as
\[
f(A) = \frac{1}{2 \pi i} \int_{\partial \Omega} ( \sigma I - A )^{-1} f( \sigma )\,d \sigma .
\]
If $f(z) = \sum_{i=1}^k a_iz^i $ is a polynomial with $a_i \in \mathbb{C}$, then the functional calculus above becomes $f(A) = \sum_{i=1}^k a_iA^i$. If $|\partial \Omega| = L$ and $\sigma:  [0, L] \to \partial \Omega$ is an arc length parametrisation of $\partial \Omega$, then
\[
f(A) = \int_0^L \frac{\sigma' (s)}{2 \pi i} R( \sigma (s) , A ) f( \sigma (s))\,ds ,
\]
where $R( \sigma , A )$ is the resolvent, $( \sigma I - A )^{-1}$.

A key idea in \cite{1root2} was to look at
\begin{equation}\label{cauchytransform}
g(A) := \int_0^L \frac{\sigma' (s)}{2 \pi i} R( \sigma (s) , A ) \overline{f( \sigma (s) )}\,ds,
\end{equation}
which in the scalar case reduces to the Cauchy transform of $\overline{f}$.
Crouzeix and Palencia analysed the operator
\[
 f(A) + g(A )^{*} = \int_0^L \mu ( \sigma (s) , A ) f( \sigma (s) )\,ds , 
\]
where the Hermitian operator $\mu ( \sigma (s) , A )$ is
\begin{equation}
\mu ( \sigma (s) , A ) = \frac{\sigma' (s)}{2 \pi i} R( \sigma (s), A ) + \left[ 
\frac{\sigma' (s)}{2 \pi i} R( \sigma (s), A ) \right]^{*} . \label{mu}
\end{equation}
They showed that $\|f(A) + g(A)^*\| \leq 2$, and then consequently showed that the numerical range was a $1 + \sqrt{2}$-spectral set (see also \cite{ransford2018remarks} for an alternative shorter proof). In \cite{CCpalenciaextension}, the authors studied regions $\Omega$
that do not necessarily contain $W(A)$. They defined 
\begin{equation}
M( \sigma , A ) := \mu ( \sigma , A ) - \lambda_{\min} ( \mu ( \sigma , A ) ) I , \label{M}
\end{equation}
where $\lambda_{\min} ( \mu ( \sigma , A ))$ is the infimum of the spectrum of 
$\mu ( \sigma , A )$ at the point $\sigma$ on $\partial \Omega$.  Thus, by definition, 
$M( \sigma , A )$ is positive semidefinite on $\partial \Omega$.

We introduce the notation 
\begin{equation}\label{gamma}
    \gamma(f) := - \int_0^L \lambda_{\min}( \mu ( \sigma (s), A ) ) 
f( \sigma(s) )\,ds , \quad |\partial \Omega| = L .
\end{equation}

Using the same method of proof as in \cite{1root2}, they proved the following lemma. We provide an alternate proof.

\begin{Lemma}\label{mainlemma}
Let $\Omega$ be a region with smooth boundary containing the spectrum of $A$ in its interior.  
For $f \in {\cal A} ( \Omega )$ with $\| f \|_{\Omega} = 1$, let
\begin{equation}
S := f(A) + g(A )^{*} + \gamma(f) I .
\label{S}
\end{equation}
Then $\| S \| \leq 2 + \gamma(1)$.
\end{Lemma}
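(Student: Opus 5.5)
We start from the representation
\[
S=\int_0^L \mu(\sigma(s),A) f(\sigma(s))\,ds-\int_0^L \lambda_{\min}(\mu(\sigma(s),A)) f(\sigma(s))\,ds\, I=\int_0^L M(\sigma(s),A)\, f(\sigma(s))\,ds,
\]
so that $S$ is an integral of the positive semidefinite operators $M(\sigma(s),A)$ of \eqref{M} against the bounded scalar function $f\circ\sigma$. The plan is to bound $\langle Sx,y\rangle$ directly using positivity of $M$, rather than the original Crouzeix--Palencia route.

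\textbf{Step 1: a Cauchy--Schwarz bound.} Fix unit vectors $x,y$. Since $M(\sigma,A)\ge 0$, we have $|\langle M x,y\rangle|\le \langle Mx,x\rangle^{1/2}\langle My,y\rangle^{1/2}$. Using $|f|\le 1$ and Cauchy--Schwarz in $s$,
\[
|\langle Sx,y\rangle|\le \Big(\int_0^L\langle M(\sigma(s),A)x,x\rangle ds\Big)^{1/2}\Big(\int_0^L\langle M(\sigma(s),A)y,y\rangle ds\Big)^{1/2}.
\]
It therefore suffices to show $\int_0^L M(\sigma(s),A)\,ds=(2+\gamma(1))I$.

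\textbf{Step 2: computing $\int_0^L \mu\,ds$.} Applying the Cauchy formula with $f\equiv 1$ gives $\int_0^L \frac{\sigma'(s)}{2\pi i}R(\sigma(s),A)\,ds=I$, since the spectrum lies inside $\Omega$. Taking adjoints, $\int_0^L\mu(\sigma(s),A)\,ds=I+I=2I$. Hence
\[
\int_0^L M\,ds=2I-\int_0^L\lambda_{\min}(\mu)\,ds\, I=(2+\gamma(1))I,
\]
directly from definition \eqref{gamma} with $f=1$. Combining this with Step 1 gives $|\langle Sx,y\rangle|\le 2+\gamma(1)$, and taking the supremum over $x,y$ yields $\|S\|\le 2+\gamma(1)$.

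\textbf{Main obstacle.} The main technical point is justifying the operator-valued integration. The function $s\mapsto \lambda_{\min}(\mu(\sigma(s),A))$ must be measurable, indeed continuous, which follows from norm-continuity of the resolvent on $\partial\Omega$ and the fact that $\lambda_{\min}$ is $1$-Lipschitz in operator norm. One must also check that the weak-integral form of Cauchy--Schwarz applies, which is routine since everything is norm-continuous on the compact curve. Finally, $\gamma(1)$ is real because $\mu$ is Hermitian, so $2+\gamma(1)$ makes sense as a constant. It is also nonnegative, since $\int_0^L M\,ds\ge 0$.
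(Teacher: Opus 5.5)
Your proof is correct, but it follows a different route from the paper's. The paper writes $S=\Phi(f)$ for the map $\Phi(h)=\int_0^L h(s)M(\sigma(s),A)\,ds$ from $C(\partial\Omega)$ into $B(\mathcal{H})$. It notes that $\Phi$ is positive because $M\ge 0$, and then quotes the general theorem (Paulsen \cite{paulsen2002completely}, Ch.~2, which rests on Russo--Dye) that a positive map between unital $C^*$-algebras satisfies $\|\Phi\|=\|\Phi(1)\|$. It uses $\Phi(1)=(2+\gamma(1))I$ without further comment.

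You prove the same norm bound by hand instead. The pointwise inequality $|\langle Mx,y\rangle|\le\langle Mx,x\rangle^{1/2}\langle My,y\rangle^{1/2}$, followed by Cauchy--Schwarz in $L^2(ds)$, is a direct proof of $\|\Phi(h)\|\le\|h\|_\infty\|\Phi(1)\|$ for maps given by integration against a positive operator-valued density. Your Step 2 also supplies the computation $\int_0^L\mu\,ds=2I$ (Cauchy formula with $f\equiv1$ plus its adjoint), which the paper leaves implicit.

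The paper's version is shorter and places the lemma within the general theory of positive maps. Yours is self-contained, avoids Russo--Dye, and applies verbatim to any bounded measurable $h$ on $\partial\Omega$ with $|h|\le 1$. Holomorphy of $f$ plays no role in either argument.

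One correction to how you framed it: this pointwise Cauchy--Schwarz estimate is essentially the classical argument from \cite{1root2}, which the paper says \cite{CCpalenciaextension} also followed. So your proof is close to the original Crouzeix--Palencia route, and it is the paper's positive-map proof that is the alternative.
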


\begin{proof}

Let $\Phi(h) = \int_0^L h (s) M(\sigma(s),A) ds$ be a map from continuous function on $\partial \Omega$ to the space of bounded operators on $\mathcal{H}$. Then since $M$ is positive semidefinite, $\Phi$ is a positive map between unital $C^*$ algebras and thus by \cite[Chapter 2]{paulsen2002completely} $\|\Phi\| = \Phi(1) = 2+\gamma(1)$. So for $f \in {\cal A} ( \Omega )$ with $\|f\|_{\Omega} \leq 1$, we have $\|S\| = \|\Phi(f)\| \leq 2 + \gamma(1) $.
\end{proof}
\noindent In \cite{CCpalenciaextension}, the authors then used this lemma to prove the following theorem.

\begin{Theorem}
Let $\Omega$ be bounded, convex domain with smooth boundary containing the spectrum of $A$
in its interior.  Then
$\Omega$ is a $ 1 + \frac{\gamma(1)}{2} + \sqrt{2 + \gamma(1) + \gamma(1)^2 / 4 + m}$-spectral set where 
\begin{equation}
m = \int_0^L | \lambda_{\min} ( \mu ( \sigma (s) , A )) |\,ds .
\label{gamma(1)_hat}
\end{equation}
\label{theorem:1}
\end{Theorem}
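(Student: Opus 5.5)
The plan is to follow the Crouzeix--Palencia argument, in the form given by Ransford and Schwenninger \cite{ransford2018remarks}, with Lemma~\ref{mainlemma} in place of the bound $\|f(A)+g(A)^*\|\le 2$. Fix $f\in{\cal A}(\Omega)$ with $\|f\|_\Omega=1$ and put $T=f(A)$. Let $K$ be the best spectral constant, i.e.\ the supremum of $\|h(A)\|$ over $\|h\|_\Omega\le 1$. This is finite, since the Cauchy integral gives $K\le \frac{L}{2\pi}\sup_{\partial\Omega}\|R(\sigma,A)\|$. The goal is the quadratic inequality
\begin{equation*}
K^2 \le (2+\gamma(1))\,K + 1 + m .
\end{equation*}
Its positive root is $1+\frac{\gamma(1)}{2}+\sqrt{(1+\frac{\gamma(1)}{2})^2+1+m} = 1+\frac{\gamma(1)}{2}+\sqrt{2+\gamma(1)+\gamma(1)^2/4+m}$, which is exactly the constant in the statement.

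\textbf{Step 1.} Use convexity exactly as in \cite{1root2}. For convex $\Omega$ the Cauchy transform of $\overline{h}$ is again in ${\cal A}(\Omega)$ and has sup-norm at most $\|h\|_\Omega$, since the unperturbed double-layer kernel is a positive measure of mass $1$ in this case. Hence $\|g(A)\|\le K\|f\|_\Omega$. The same holds for the transforms built from products such as $\overline{f}f$, which enter the Ransford--Schwenninger identity.

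\textbf{Step 2.} Write $T^*T$, or more precisely the quantity $\|Tx\|^2$ for a near-extremal unit vector $x$, through the Ransford--Schwenninger identity. In that identity $S$ appears multiplied by $T$, and a correction term involves the Cauchy transform of $|f|^2$. By Lemma~\ref{mainlemma}, the operator $S=T+g(A)^*+\gamma(f)I$ satisfies $\|S\|\le 2+\gamma(1)$. The substitution $T+g(A)^*=S-\gamma(f)I$ therefore produces two contributions. The first is the term $\|S\|\,\|T\|\le(2+\gamma(1))K$. The second is an error proportional to $\gamma(\cdot)$ evaluated at $1$, at $f$, or at $|f|^2$.

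\textbf{Step 3.} Bound every such error using only $|\lambda_{\min}|$. For any $h$ with $|h|\le 1$ on $\partial\Omega$,
\begin{equation*}
|\gamma(h)|\le \int_0^L|\lambda_{\min}(\mu(\sigma(s),A))|\,ds=m .
\end{equation*}
This is what turns the constant term $1$ of the classical inequality $K^2\le 2K+1$ into $1+m$. Because $f$ was arbitrary, taking the supremum over $f$ gives the quadratic inequality in $K$, and solving it finishes the proof.

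The main obstacle is Step 2: the perturbation $\gamma(f)I$ and its companions must contribute exactly $+\gamma(1)K$ to the linear coefficient and $+m$ to the constant, with no extra $K$-dependence. Concretely, one must check that the term $\gamma(f)\,T$ (or $\gamma(f)\,x$ paired with $Tx$) is absorbed into $\|S\|\,K$ rather than counted separately. One must also check that the only new scalar term is $\gamma$ applied to a unimodular-bounded function, which Step 3 bounds by $m$. If the bookkeeping instead produced $mK$, I would first try to recombine that term with $S$ itself, using Lemma~\ref{mainlemma} with $\gamma(f)$ included, before estimating.
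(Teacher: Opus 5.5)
\textbf{Verdict: there is a genuine gap.} Your outline has the right architecture: the Crouzeix--Palencia/Ransford--Schwenninger argument, with Lemma~\ref{mainlemma} replacing $\|f(A)+g(A)^*\|\le 2$, plus the bound $|\gamma(f)|\le m$. But the step you flag as the main obstacle is exactly the step that carries the proof, and neither of your proposed fixes works.

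Let $T=f(A)$ with $\|T\|=k$, and take a unit $x$ with $\|T^*Tx-k^2x\|\le\varepsilon$. Substituting $T=S-g(A)^*-\gamma(f)I$ gives
\[
\|Tx\|^2=\langle Tx,Sx\rangle-\langle (gf)(A)x,x\rangle-\overline{\gamma(f)}\,\langle Tx,x\rangle .
\]
So the correction terms are of two kinds:
\begin{itemize}
\item $(gf)(A)$, the \emph{product} of $f$ with the Cauchy transform of $\overline f$. It is not the Cauchy transform of $|f|^2$.
\item A single scalar multiple of $\gamma(f)$. The quantity $\gamma(1)$ enters only through $\|S\|$, and $\gamma(|f|^2)$ never appears.
\end{itemize}
The last term cannot be ``absorbed into $\|S\|K$'', because $S$ already contains $\gamma(f)I$. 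Recombining it with $S$ means using $\|T+g(A)^*\|\le 2+\gamma(1)+m$. The crude bound $|\langle Tx,x\rangle|\le K$ leads to the same place. Either way you get $K^2\le(2+\gamma(1)+m)K+1$, which is not the stated constant.

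The missing idea is the extremal-vector estimate. The same estimate is what produces the constant term $1$ in the classical inequality $K^2\le 2K+1$. You take that term for granted, but it does not follow from Step~1: the bound $\|(gf)(A)\|\le K$ alone only gives $K\le 3$.

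Suppose $f$ is near-extremal, i.e.\ $k$ is close to $K$. Then $x\approx k^{-2}T^*Tx$, hence
\[
\langle Tx,x\rangle\approx k^{-2}\langle f^2(A)x,Tx\rangle,\qquad \langle (gf)(A)x,x\rangle\approx k^{-2}\langle (f^2g)(A)x,Tx\rangle .
\]
Both have modulus at most $(K+O(\varepsilon))/k\to 1$. This uses $\|f^2\|_\Omega\le 1$ and $\|f^2g\|_\Omega\le 1$, where convexity gives $\|g\|_\Omega\le 1$. Combined with $|\gamma(f)|\le m$, this yields $K^2\le(2+\gamma(1))K+1+m$, which is the stated constant. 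Note that this requires $f$ to be near-extremal, not arbitrary as you wrote.

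The paper does not reprove this theorem; it cites \cite{CCpalenciaextension}. Its proof of Theorem~\ref{main} handles the same cross term more cleanly. By Theorem~\ref{orthogthm}, for an extremal pair with $\|\hat f(A)\|>1$ one has $\langle \hat f(A)x,x\rangle=\int\hat f\,d\mu=0$, so the $\gamma(\hat f)$ term vanishes. Also $|\langle(\hat fg)(A)x,x\rangle|=|\int\hat fg\,d\mu|\le 1$. This gives $K^2\le(2+\gamma(1))K+1$, which implies the stated bound since $m\ge 0$.
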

\noindent Theorem 2 in \cite{crouzeix2019spectral} sharpens this spectral constant to 
$$
1 + \frac{\gamma(1)}{2} + \sqrt{\left(1+ \frac{\gamma(1)}{2}\right)^2 + c_1 + \hat{\gamma}}, \quad \hat\gamma  :=\max\{|\gamma(f)|\,: |f |\leq 1 \text{ in }\Omega\}
$$
where $c_1 = \sup \{ \|g\|_{\infty, \Omega} : \|f\|_{\infty, \Omega}\leq 1 \}$ and $g$ is the Cauchy transform of $\overline{f}$. We sharpen this result with the following theorem.
\begin{Theorem}\label{main}
    \begin{enumerate}
        \item Let $\Omega$ be bounded, convex domain with smooth boundary containing the spectrum of $A$
in its interior.  Then
$\Omega$ is a \begin{equation}\label{maineq}
    1 + \frac{\gamma(1)}{2} + \sqrt{\left(1+ \frac{\gamma(1)}{2}\right)^2 + a(\Omega)}
\end{equation} 
-spectral set, where $a(\Omega) = \sup_{f \in \mathcal{A}(\Omega)}\{ \inf_{\lambda \in \mathbb{C}}\|g- \lambda \|_{\infty , \Omega} :  \|f\|_{\infty, \Omega} \leq 1  \}$, where $g$ is the Cauchy transform of $\overline{f}$ .
\item There is a bounded, convex $\Omega$ with smooth boundary containing the spectrum of $A$ and a $f \in \mathcal{A}(\Omega)$ such that $\|f(A)\| = \left(1 + \frac{\gamma(1)}{2} + \sqrt{\left(1+ \frac{\gamma(1)}{2}\right)^2 + a(\Omega)}\right)\|f\|_{\Omega}$.
\end{enumerate}
\end{Theorem}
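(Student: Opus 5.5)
Part 1 follows the Crouzeix--Palencia argument, adapted as in \cite{crouzeix2019spectral}, but it exploits the freedom to subtract a constant from $g$. Since $\Omega$ is bounded, $\sup_{\|f\|\le1}\|f(A)\|$ is finite and attained up to $\epsilon$, so we may normalise: fix $f\in\mathcal{A}(\Omega)$ with $\|f\|_\Omega=1$ and set $\kappa:=\sup\{\|h(A)\| : \|h\|_\Omega\le 1\}$. By Lemma~\ref{mainlemma}, $S=f(A)+g(A)^*+\gamma(f)I$ satisfies $\|S\|\le 2+\gamma(1)$.

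The key point is that $g$ is the Cauchy transform of $\overline f$ on $\partial\Omega$. For any $\lambda\in\mathbb{C}$ the Cauchy integral of the constant $\lambda$ gives $\lambda I$, so $(g-\lambda)(A)=g(A)-\lambda I$, and $g-\lambda\in\mathcal{A}(\Omega)$ (convex smooth $\Omega$ guarantees boundary continuity). Hence $\|g(A)-\lambda I\|\le \kappa\,\|g-\lambda\|_{\infty,\Omega}$, and choosing $\lambda$ optimally yields $\|g(A)-\lambda I\|\le \kappa\, a(\Omega)$.

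We then run the standard $C^*$-identity trick on the operator $f(A)^*f(A)f(A)^*f(A)$, or more simply follow \cite{1root2}. Write
\[
f(A)^*f(A)=f(A)^*\bigl(S-(g(A)-\lambda I)^*-(\gamma(f)+\bar\lambda)I\bigr).
\]
Applying this to $f^n$ or to the product $f\cdot f$ in the usual way, and using that $(\bar g f)$ corresponds on the boundary to a function with $\|f(A)^*g(A)^*\|$ handled through $g(A)f(A)=(gf)(A)$, gives the quadratic inequality $\kappa^2\le (2+\gamma(1))\kappa+\kappa\,a(\Omega)$-type terms. The term $\gamma(f)+\bar\lambda$ needs care: I expect to absorb $\gamma(f)$ together with $\lambda$ by redefining the constant. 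The real $\gamma(1)$ part enters linearly, producing the shape $\kappa^2\le(2+\gamma(1))\kappa+a(\Omega)$. Solving this gives exactly \eqref{maineq}. The main obstacle is arranging the algebra so that the cross term is bounded by $a(\Omega)$ rather than $c_1+\hat\gamma$. Concretely, the product $f(A)^*(g(A)-\lambda)^*$ must be controlled by $\|(g-\lambda)\bar{}\,\|$ times $\|f\|$ via a positivity/Cauchy-transform identity. I would first try, as in \cite{ransford2018remarks}, to use that $g-\lambda$ and $f$ are both in $\mathcal{A}(\Omega)$, so that $(f(g-\lambda))(A)$ is bounded by $\kappa\|g-\lambda\|$, and to take the extremal $f$ with $\|f(A)\|=\kappa$.

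For Part 2 I would take $A$ normal, or simply $A=0$ (a scalar), with $\Omega$ the unit disc. Then $W(A)\subseteq\Omega$ and $\mu\ge0$, so $\lambda_{\min}=0$ and $\gamma(1)=0$. For the disc, $g$ is the Cauchy transform of $\bar f$, which equals the constant $\overline{f(0)}$, so $a(\Omega)=0$. The bound \eqref{maineq} is then $2$, and I need to double check whether equality $\|f(A)\|=2$ is possible; since von Neumann gives $1$, this choice fails. Instead I would choose $\Omega$ so that $\mu$ has negative eigenvalues (a disc not containing $W(A)$, e.g.\ a Jordan block with $\Omega$ a small disc around the spectrum). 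There $\gamma(1)$ is explicitly computable and $a(\Omega)=0$, so the constant is $2+\gamma(1)$, and I would match it to a known extremal such as $f(z)=z$ for a $2\times2$ nilpotent $A$ against a disc of radius $r<1/2$. Verifying this equality numerically/explicitly is the step I expect to be delicate.
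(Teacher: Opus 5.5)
\textbf{Part 1 has a genuine gap, and it is exactly the step you flag as the main obstacle.} Your reduction to Lemma \ref{mainlemma} and the target quadratic are right, but the operator-norm route cannot reach that quadratic.

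Take $T=f(A)$ extremal and substitute $T^*=S^*-(g(A)-\lambda I)-(\overline{\gamma(f)}+\lambda)I$ into the middle factor of $T^*TT^*T$. This gives
\[
T^*TT^*T=T^*TS^*T-T^*\bigl(f(g-\lambda)f\bigr)(A)-(\overline{\gamma(f)}+\lambda)T^*T^2 .
\]
The norm estimates then yield only
\[
\kappa^2\le(2+\gamma(1))\kappa+\|g-\lambda\|_{\infty,\Omega}+|\gamma(f)+\bar\lambda| .
\]
For $\lambda=0$ this is just the $c_1+\hat\gamma$ bound quoted before the theorem. The extra term vanishes only for $\lambda=-\overline{\gamma(f)}$, which has nothing to do with the $\lambda$ minimising $\|g-\lambda\|_{\infty,\Omega}$, so it cannot be ``absorbed''.

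More fundamentally, $\kappa^2\le(2+\gamma(1))\kappa+a(\Omega)$ cannot be derived without using $\kappa>1$, because it fails when $\kappa=1$. Take $A=0$ and $\Omega$ the unit disc. Then $\mu(\sigma,A)=\frac1\pi I$, so $\gamma(1)=-2$. Also $g=\overline{f(0)}$ is constant, so $a(\Omega)=0$. This is your first attempt at Part 2: $\mu\ge0$ gives $\lambda_{\min}\ge 0$, not $\lambda_{\min}=0$, and there \eqref{maineq} equals $0$.

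\textbf{The missing idea is Theorem \ref{orthogthm}.} It needs a genuine extremal $\hat f$ with an extremal unit vector $x$, not an $\epsilon$-approximate one, together with $\|\hat f(A)\|>1$. Instead of operator norms, look at the scalar $\langle Sx,\hat f(A)x\rangle$.

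The representing probability measure $\mu$ turns $\langle h(A)x,x\rangle$ into $\int_{\partial\Omega}h\,d\mu$. The orthogonality $\langle\hat f(A)x,x\rangle=\int_{\partial\Omega}\hat f\,d\mu=0$ then does two things at once: it kills the term $\gamma(\hat f)\langle x,\hat f(A)x\rangle$, and it lets you replace $g$ by $g-\lambda$ for free. Hence
\[
\|\hat f(A)\|^2+\Re\int_{\partial\Omega}\hat f\,(g-\lambda)\,d\mu=\Re\langle Sx,\hat f(A)x\rangle\le(2+\gamma(1))\|\hat f(A)\| \quad\text{for all }\lambda\in\mathbb{C}.
\]
Since $\mu$ is a probability measure and $|\hat f|\le1$, the integral is at least $-\|g-\lambda\|_{\infty,\Omega}$, with no factor $\kappa$. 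This gives $\|\hat f(A)\|^2\le(2+\gamma(1))\|\hat f(A)\|+a(\Omega)$. Strictly, the argument proves that $\Omega$ is a $\max\{1,c\}$-spectral set, where $c$ is the quantity in \eqref{maineq}; this is consistent with the example above.

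\textbf{Part 2.} Your second idea is right and is essentially the paper's example. For $A=\begin{pmatrix}0&1\\0&0\end{pmatrix}$ and $\Omega=\{|z|<r\}$,
\[
\lambda_{\min}\bigl(\mu(re^{i\theta},A)\bigr)=\frac{1}{2\pi r}\Bigl(2-\frac1r\Bigr).
\]
Hence $\gamma(1)=\frac1r-2$ and $a(\Omega)=0$, so \eqref{maineq} equals $\frac1r=\|A\|/\|z\|_{\Omega}$, and $f(z)=z$ attains it. The paper takes $r=\frac12$, rescaled as $A=\begin{pmatrix}0&2\\0&0\end{pmatrix}$ with $\Omega=W(A)$ the unit disc, where $\lambda_{\min}\equiv0$.
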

The second statement in the theorem above shows that spectral constant $1 + \frac{\gamma(1)}{2} + \sqrt{\left(1+ \frac{\gamma(1)}{2}\right)^2 + a(\Omega)}$ is in some sense optimal. In \cite{malman2024double} the quantity $a(\Omega)$ was studied to sharpen the $1+ \sqrt{2}$ spectral constant for the numerical range by an indefinite amount. Before proving Theorem \ref{main} we state a result, which will be instrumental in the proof of Theorem \ref{main}.

In the following theorem a function $\hat{f} \in \mathcal{A}(\Omega)$ maximising $\|f(A)\|$ among all holomorphic functions $f$ on $ \Omega$ with $|f| \leq 1$ on $ \Omega$ is called extremal for the pair $(A, \Omega)$. A $x \in \mathcal{H}$ is said to be an associated extremal vector if $\|\hat{f}(A)\|\|x\| = \|\hat{f}(A)x\|$. By construction, if $\|\hat{f}(A)\| \leq c_\Omega$, then $\Omega$ is a $c_{\Omega}$-spectral set.

\begin{Theorem}\label{orthogthm} Let $\hat{f}$ be extremal for $(A,\Omega)$, and let $x$ be an associated extremal unit vector.
Then there exists a unique Borel probability measure $\mu$ on $\partial\Omega$ such that
\begin{equation}\label{1st}
\langle h(A)x,x\rangle=\int_{\partial\Omega} h\,d\mu \quad \text{ for all } h\in \mathcal{A}(\Omega).
\end{equation}
If, further, $\|\hat{f}(A)\| > 1$, then \begin{equation}\label{E:rep}
    \int_{\partial \Omega} \hat{f} d\mu = 0.
\end{equation}
\end{Theorem}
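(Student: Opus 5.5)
The plan is a variational argument around the extremal pair $(\hat f,x)$: perturb $\hat f$ inside the unit ball of $\mathcal{A}(\Omega)$ and read off first-order conditions from $\|f(A)x\|^2\le c^2$, where $c:=\|\hat f(A)\|$. These conditions give linear functionals on $\mathcal{A}(\Omega)$ of norm at most one. Hahn--Banach and the Riesz representation theorem then turn them into measures on $\partial\Omega$.

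\textbf{Step 1 (a first representing measure).} For $h\in\mathcal{A}(\Omega)$ and small $\varepsilon\in\mathbb{C}$ set
\[
f_\varepsilon=(\hat f+\varepsilon h)/(1+|\varepsilon|\|h\|_\Omega),
\]
which still has $\|f_\varepsilon\|_\Omega\le 1$. Expanding $\|f_\varepsilon(A)x\|^2\le c^2$ to first order in $\varepsilon$ gives
\[
|\langle h(A)x,\hat f(A)x\rangle|\le c^2\|h\|_\Omega .
\]
So $\Lambda(h):=c^{-2}\langle h(A)x,\hat f(A)x\rangle$ has norm at most $1$ on $\mathcal{A}(\Omega)\subset C(\partial\Omega)$, and $\Lambda(\hat f)=1$.

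Extend $\Lambda$ by Hahn--Banach to a complex measure $\nu$ with $\|\nu\|\le1$. Since $\int \hat f\,d\nu=1=\|\nu\|$, we get $|\hat f|=1$ $\nu$-a.e. and $d\nu=\overline{\hat f}\,d\mu$ for a Borel probability measure $\mu$. Hence
\[
\langle h(A)x,\hat f(A)x\rangle=c^2\int h\,\overline{\hat f}\,d\mu .
\]

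\textbf{Step 2 (representing $\langle h(A)x,x\rangle$; main obstacle).} The statement needs $\langle h(A)x,x\rangle=\int h\,d\mu$ for this same $\mu$, and Step 1 gives only the pairing against $\hat f(A)x$. I would try to use the extremality more fully, via a second family of admissible perturbations such as $\hat f\cdot\varphi$ with $\varphi$ a Möbius-type factor built from $h$. The aim is to show that $h\mapsto\langle h(A)x,x\rangle$ is itself a norm-$\le1$ functional that equals $1$ at the constant $1$. Any Hahn--Banach extension of such a functional is automatically a positive measure of mass one, because a norm-one functional on $C(\partial\Omega)$ taking the value $1$ at $1$ is a state. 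The remaining task would then be to match this measure with the $\mu$ of Step 1.

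If this direct route stalls, I would instead invoke the corresponding result in the literature on extremal functions for Crouzeix's conjecture (Bickel, Gorkin, Greenbaum, Ransford, Schwenninger, Wegert), which is exactly this statement.

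\textbf{Uniqueness.} Since $\Omega$ is a smoothly bounded simply connected domain, solvability of the Dirichlet problem makes $\mathrm{Re}\,\mathcal{A}(\Omega)$ dense in the real continuous functions on $\partial\Omega$. A real measure is therefore determined by its integrals against $\mathcal{A}(\Omega)$, and a positive $\mu$ satisfying \eqref{1st} is unique.

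\textbf{Proof of \eqref{E:rep}.} Compose with disc automorphisms: $\varphi_a(\hat f)=(\hat f-a)/(1-\bar a\hat f)$ is admissible, and to first order in $a$ it equals $\hat f-a+\bar a\hat f^2$. Expanding $\|\varphi_a(\hat f)(A)x\|^2\le c^2$, optimality at $a=0$ forces the coefficient of $\bar a$ to vanish, which gives
\[
\langle \hat f(A)^2x,\hat f(A)x\rangle=\langle \hat f(A)x,x\rangle .
\]
By Step 1 with $h=\hat f^2$ and $|\hat f|=1$ $\mu$-a.e., the left side equals $c^2\int\hat f\,d\mu$. By \eqref{1st} the right side equals $\int\hat f\,d\mu$. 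Hence $(c^2-1)\int\hat f\,d\mu=0$, and if $\|\hat f(A)\|>1$ we obtain $\int_{\partial\Omega}\hat f\,d\mu=0$.
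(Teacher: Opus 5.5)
\textbf{There is a genuine gap at Step 2.} The paper states this theorem without proof (it is quoted from the literature), so your argument has to stand on its own. Step 1, the uniqueness argument and the final computation for \eqref{E:rep} are essentially fine. Step 2, however, is the actual content of \eqref{1st}, and you do not carry it out. You state an aim, sketch an unspecified family of M\"obius-type perturbations, and fall back on citing the statement itself, which is not a proof. As written, you have only represented $h\mapsto\langle h(A)x,\hat f(A)x\rangle$. Your derivation of \eqref{E:rep} uses \eqref{1st} for the same $\mu$, so it is incomplete too.

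\textbf{The missing idea.} No further perturbation is needed. Put $c=\|\hat f(A)\|$, which is at least $1$ because the constant $1$ is admissible, and set $T=\hat f(A)^*\hat f(A)$. Since $\langle Tx,x\rangle=c^2=\|T\|$, you get $\|(T-c^2)x\|^2=\|Tx\|^2-c^4\le 0$, so $Tx=c^2x$. The functional calculus is multiplicative, so $h(A)$ commutes with $\hat f(A)$. Applying Step 1 to $\hat f h$ in place of $h$ then gives
\[
\langle h(A)x,x\rangle=c^{-2}\langle h(A)x,Tx\rangle=c^{-2}\langle(\hat f h)(A)x,\hat f(A)x\rangle=\int_{\partial\Omega}\hat f h\,\overline{\hat f}\,d\mu=\int_{\partial\Omega}h\,d\mu ,
\]
using $|\hat f|=1$ $\mu$-a.e. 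This is \eqref{1st} with exactly your Step 1 measure, so there is nothing left to match.

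\textbf{The M\"obius perturbation then becomes unnecessary.} Step 1 with $h=1$ gives $\langle\hat f(A)x,x\rangle=c^2\int\hat f\,d\mu$. Identity \eqref{1st} with $h=\hat f$ gives $\langle\hat f(A)x,x\rangle=\int\hat f\,d\mu$. Hence $(c^2-1)\int\hat f\,d\mu=0$. Your M\"obius identity is correct but equivalent to this, since $\langle\hat f(A)^2x,\hat f(A)x\rangle=c^2\langle\hat f(A)x,x\rangle$ by $Tx=c^2x$.

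\textbf{A minor point on uniqueness.} Solvability of the Dirichlet problem alone does not put the solution in $\mathrm{Re}\,\mathcal{A}(\Omega)$, because the harmonic conjugate need not extend continuously to the boundary. You should approximate instead, for example by dilating about an interior point using convexity, or by citing Walsh's theorem on harmonic polynomials. The conclusion is correct.
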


\noindent \textit{Proof of Theorem \ref{main}.}
    \begin{enumerate}
        \item Let $\hat{f}$ be an extremal function with associated extremal unit vector $x$ and let $g$ be the Cauchy transform of $\overline{\hat{f}}$. Then by Lemma \ref{mainlemma} we have 
\begin{align}
    & \|\hat{f}(A)\|^2 + \inf_{\lambda \in \mathbb{C}} \left(\Re \int_{\partial\Omega}\hat{f}(g- \lambda) d \mu \right)  \\
    & = \|\hat{f}(A)\|^2 + \Re \int_{\partial\Omega}\hat{f}g d \mu \quad \text{by \eqref{E:rep}} \\
    &\leq |\|\hat{f}(A)\|^2 + |\langle x, \hat{f}g(A)x \rangle| \quad \text{by \eqref{1st}}\\
    &=|\langle Sx,\hat{f}(A)x \rangle| \quad \text{by Lemma \ref{mainlemma} and \eqref{E:rep}} \\
    &\leq (2 + \gamma(1)) \|\hat{f}(A)\| \quad \text{by Lemma \ref{mainlemma}}.
\end{align}
Labelling $r  = \inf_
{\lambda \in \mathbb{C}} \left(\Re \int_{\partial\Omega}\hat{f}g- \lambda) d \mu \right)$ algebraically manipulating the above yields $$\|\hat{f}(A)\| \leq  1 + \frac{\gamma(1)}{2} + \sqrt{\left(1+ \frac{\gamma(1)}{2}\right)^2 - r}.$$
Since $\mu $ is a probability measure and $\|\hat{f}\|_{\Omega} \leq 1$ it follows that $|r| \leq a(\Omega)$, and thus statement $(i)$ follows. 
\item This statement follows by setting $A = \begin{pmatrix}
    0 & 2 \\
    0 & 0
\end{pmatrix}$, $ \Omega = W(A) = \{z \in \mathbb{C} :  |z| \leq 1 \}$ and $f(z) = z$. In this case, a direct computation yields $\lambda_{\min} ( \mu ( \sigma (s) , A )) = 0$ and $g = \overline{f(0)}$, and thus $a(\Omega) = 0$. 
\end{enumerate}
\qed

\begin{Remark}
Note that when $\Omega = W(A)$ we recover the spectral constant obtained in \cite{malman2024double}.
\end{Remark}

\section{Geometric Bounds for Spectral Constants}\label{3}
From Theorem \ref{main} we see the spectral constant for $\Omega$ depends on the parameter $\gamma(1)$ (where $\gamma$ is defined in \eqref{gamma}). In this section we estimate the parameter $\gamma(1)$ to obtain spectral constants dependent on the geometry of $\Omega$ and $W(A)$. The following two lemmas are needed for the proof of Theorem \ref{maingamma}.

The function,
$$
h_{\Omega}(e^{i \theta}) := \sup_{p \in \Omega} \Re(pe^{-i \theta})
$$
is called the support function of $\Omega$.

\begin{Lemma}\label{per}
$$
\int_0^{2 \pi} h_{\Omega}( e ^{i \theta}) d \theta = |\partial \Omega|.
$$
\end{Lemma}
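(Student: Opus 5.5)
We prove Cauchy's formula for the perimeter of a convex body: the integral of the support function over the circle equals $|\partial\Omega|$. We work with $\Omega$ bounded, convex, with smooth (say $C^2$, strictly convex) boundary, and obtain the general convex case by approximation. Since $h_\Omega$ is $1$-Lipschitz in the Hausdorff metric and perimeter is continuous under Hausdorff convergence of convex sets, both sides pass to the limit.

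For strictly convex smooth $\partial\Omega$, each direction $e^{i\theta}$ has a unique point $p(\theta)\in\partial\Omega$ at which the supremum defining $h_\Omega(e^{i\theta})$ is attained. At that point the outward unit normal is $e^{i\theta}$. The map $\theta\mapsto p(\theta)$ is then a smooth parametrisation of $\partial\Omega$, namely the inverse Gauss map. Write $h(\theta)=h_\Omega(e^{i\theta})=\Re\big(p(\theta)e^{-i\theta}\big)$. Differentiating, and using that $p'(\theta)$ is tangent and hence orthogonal to the normal, so that $\Re\big(p'(\theta)e^{-i\theta}\big)=0$, we get
\[
h'(\theta)=\Re\big(-i\,p(\theta)e^{-i\theta}\big)=\Re\big(p(\theta)\,\overline{ie^{i\theta}}\big).
\]
This yields the standard representation
\[
p(\theta)=\big(h(\theta)+ih'(\theta)\big)e^{i\theta}.
\]

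Differentiating once more gives $p'(\theta)=\big(h(\theta)+h''(\theta)\big)\,ie^{i\theta}$. By convexity, $h+h''\ge 0$; this quantity is the radius of curvature. Hence
\[
|\partial\Omega|=\int_0^{2\pi}|p'(\theta)|\,d\theta=\int_0^{2\pi}\big(h+h''\big)\,d\theta=\int_0^{2\pi}h\,d\theta,
\]
where the last step uses $\int_0^{2\pi}h''\,d\theta=0$ by periodicity of $h'$. This proves the lemma for strictly convex smooth domains.

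The main obstacle is the regularity needed for this computation: uniqueness of the support point, differentiability of $h$, and nonnegativity of $h+h''$. Flat pieces of the boundary make $p(\theta)$ discontinuous. There are two ways to handle this. One is the approximation argument above. The other is a route that avoids regularity altogether: check the formula for convex polygons, where $\int_0^{2\pi}h\,d\theta$ splits into angular sectors and each edge contributes exactly its length, and then approximate a general convex $\Omega$ by polygons. In the paper's setting $\Omega$ is smooth and convex, so the approximation step is routine.
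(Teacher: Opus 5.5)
Your argument is correct, but it takes a different route from the paper. The paper proves the lemma in two lines by citation. The projection of $\Omega$ onto the line $e^{i\theta}\mathbb{R}$ has length $h_\Omega(e^{i\theta})+h_\Omega(-e^{i\theta})$. Cauchy's surface area formula (perimeter $=\int_0^\pi$ of the width) then gives the identity after folding $[\pi,2\pi]$ onto $[0,\pi]$.

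You instead reprove the planar Cauchy formula directly, using the inverse Gauss map $p(\theta)=(h+ih')e^{i\theta}$, the identity $|p'|=h+h''$, and $\int_0^{2\pi}h''\,d\theta=0$. These computations are right.

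What the citation buys is brevity and immediate validity for every compact convex set. This matters more than your closing remark suggests. The lemma is also applied to $h_{W(A)}$ in the proof of Theorem~\ref{maingamma}, and $W(A)$ may have corners and edges (it is a polygon when $A$ is normal). Your Hausdorff approximation, or your polygon check, does cover this case, so nothing is lost.

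What your route buys is self-containment. It also shows that $h+h''$ is the radius of curvature, which is exactly the substitution $ds=\rho\,d\theta$ the paper uses in Theorem~\ref{maingamma}.

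One small precision: the direct computation needs $C^2$ boundary with strictly positive curvature, not merely strict convexity. At a point of zero curvature (e.g.\ on $y=x^4$), $h''$ is unbounded and $h\notin C^2$. So carry out the smooth case for $C^2_+$ bodies, which are dense in the Hausdorff metric.
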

\begin{proof}
The projection of $\Omega$ on $e^{i \theta} \mathbb{R}$ has length $h_{\Omega}( e ^{i \theta})  + h_{\Omega}( -e ^{i \theta})$, so Cauchy's surface area formula (see, for example Theorem 5.2.1 in \cite{Schneider2014}) yields \begin{equation}\label{perimetersupport}
        \int_0^{2 \pi}  h_{\Omega}(e^{ i \theta})  d \theta = |\partial \Omega|.
    \end{equation}
\end{proof}

\begin{Lemma}\label{3.1}
\begin{enumerate}
    \item Let $ W(A) \subseteq \Omega$ and suppose $W(A)$ has non-empty interior and $v$ be the unit normal at $\sigma \in \partial \Omega$. Then 
\begin{equation}\label{lambdaminforqnumrange}
      \frac{1}{\pi} r(\sigma) \left( h_{\Omega}(v) - h_{W(A)}(v) \right) \leq \lambda_{\min} ( \mu ( \sigma  , A )),
\end{equation}
where $r(\sigma) = \inf_{\|y\|=1}\|(\sigma - A)^{-1}y\|^2$ is the square of the minimum modulus of the resolvent.
\item     Let $w_{\Omega}(A)= \sup_{z \in \Omega}(|z|)$ and $w_{W(A)} = \sup_{z \in W(A)}(|z|)$. Then 
    $$
    \left(\frac{1}{w_{\Omega}(A) + 2w_{W(A)}}\right)^2 \leq r(\sigma).
    $$
\end{enumerate}    
\end{Lemma}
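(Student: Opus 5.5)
Part (i) reduces to computing the quadratic form of $\mu(\sigma,A)$. Take a unit vector $x$ and set $y=(\sigma-A)^{-1}x$, so that $x=(\sigma-A)y$. From \eqref{mu},
\[
\langle \mu(\sigma,A)x,x\rangle = 2\Re\Big\langle \frac{\sigma'}{2\pi i}\,y,(\sigma-A)y\Big\rangle = \frac{1}{\pi}\Re\Big(\frac{\sigma'}{i}\,\overline{\langle(\sigma-A)y,y\rangle}\Big).
\]
Next, write the outward unit normal as $v=-i\sigma'$, using the counterclockwise orientation, and identify $v$ with $e^{i\theta}$. Then $\sigma'/i=-v$, and the form becomes
\[
\frac{1}{\pi}\Re\big(\bar v\,\langle(\sigma-A)y,y\rangle\big),
\]
up to conjugation conventions, which I will check carefully.

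Expanding, $\langle(\sigma-A)y,y\rangle = \|y\|^2\big(\sigma-\langle Au,u\rangle\big)$ with $u=y/\|y\|$. So the form equals
\[
\frac{\|y\|^2}{\pi}\Big(\Re(\sigma\bar v)-\Re(\langle Au,u\rangle\bar v)\Big).
\]
Because $\Omega$ is convex and $v$ is the outward normal at $\sigma\in\partial\Omega$, we have $\Re(\sigma\bar v)=h_\Omega(v)$. Because $\langle Au,u\rangle\in W(A)$, we have $\Re(\langle Au,u\rangle\bar v)\le h_{W(A)}(v)$. The bracket is therefore at least $h_\Omega(v)-h_{W(A)}(v)$, which is nonnegative since $W(A)\subseteq\Omega$.

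The lower bound $\|y\|^2\ge r(\sigma)$ holds by definition, and it can be applied because the bracket is nonnegative. Taking the infimum over unit $x$ then gives \eqref{lambdaminforqnumrange}. The only delicate point is the sign and orientation bookkeeping between $\sigma'/(2\pi i)$ and the outward normal. I would confirm it on the example $A=0$, $\Omega$ the unit disk, where $\mu$ should be $\frac{1}{2\pi}\cdot\frac{1-|a|^2}{|\sigma-a|^2}$ at $a=0$, which is positive.

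For part (ii), start from the identity for unit $y$:
\[
\|(\sigma-A)^{-1}y\|\ \ge\ \frac{1}{\|\sigma-A\|}.
\]
Indeed, if $z=(\sigma-A)^{-1}y$, then $1=\|(\sigma-A)z\|\le\|\sigma-A\|\,\|z\|$. Hence $r(\sigma)\ge \|\sigma-A\|^{-2}$. Then bound
\[
\|\sigma-A\|\le|\sigma|+\|A\|\le w_\Omega(A)+2w_{W(A)},
\]
using $\sigma\in\partial\Omega$, so that $|\sigma|\le\sup_{z\in\Omega}|z|$ (by closure), and the classical inequality $\|A\|\le 2w(A)$, where $w(A)=\sup_{z\in W(A)}|z|$ is the numerical radius. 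Squaring and inverting yields the claim.

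I expect no real obstacle beyond the orientation and conjugation conventions in part (i).
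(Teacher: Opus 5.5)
Your proposal is correct and follows the paper's proof essentially step for step. Part (i) computes the quadratic form of $\mu$ via the resolvent, uses $\Re(\sigma\bar v)=h_\Omega(v)$ and $\langle Au,u\rangle\in W(A)$, and then $\|y\|^2\ge r(\sigma)$; part (ii) uses $\|\sigma-A\|\le|\sigma|+\|A\|\le w_\Omega(A)+2w_{W(A)}$. You are in fact more careful than the paper in noting that the bracket is nonnegative before multiplying by the lower bound $r(\sigma)$.

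Two small slips need fixing. With $v=-i\sigma'$ one has $\sigma'/i=v$, not $-v$; your displayed form $\frac1\pi\Re\bigl(\bar v\langle(\sigma-A)y,y\rangle\bigr)$ is nevertheless the correct one. Also, on the unit disk $\mu=\frac{1}{2\pi}\bigl(1+\frac{1-|a|^2}{|\sigma-a|^2}\bigr)$, which equals $\frac1\pi$ at $a=0$, not $\frac{1}{2\pi}$.
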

\begin{proof}
\begin{enumerate}
    \item Let $\|y\| =1 $ and $y = (\sigma - A)x$. As $ \mu ( \sigma (s) , A )$ is a selfadjoint matrix, we have
    \begin{align}
        &\lambda_{\min} ( \mu ( \sigma (s) , A )) \\
        &= \inf_{\|y\|=1}\langle\mu ( \sigma (s) , A )y,y \rangle \\
        & =\inf_{\|y\| = 1} \left( \frac{1}{\pi} \|x\|^2\Re\left(\left(\sigma - \left\langle A \frac{x}{\|x\|}, \frac{x}{\|x\|} \right)\right\rangle\right)v^*\right) \\
        & \geq \frac{1}{\pi} r(\sigma) \inf_{p \in W(A)} \Re ((\sigma - p)v^*) \\
        &= \frac{1}{\pi} r(\sigma) \left( \Re(\sigma v^*) - h_{W(A)}(v) \right) \\
        &=\frac{1}{\pi} r(\sigma) \left( h_{\Omega}(v) - h_{W(A)}(v) \right),
    \end{align}
    where $\Re(\sigma v^*) =  h_{\Omega}(v) $ because convexity of $\Omega$ and the supporting half plane condition yields $\Re (zv^*) \leq \Re(\sigma v^*)$ for all $z \in \Omega$.
    \item Observe for each $y$ of unit norm, we have $y = (\sigma I - A)(\sigma I - A)^{-1}y$, so the sub-multiplicative property of norms gives
        $$
        \frac{1}{|\sigma| + \|A\|}\leq \frac{1}{\|(\sigma I - A)\|} \leq \| (\sigma I - A)^{-1}y\|.
        $$
        The result follows since $\|A\| \leq 2w_{W(A)} $.
\end{enumerate}    
\end{proof}

We now present the main result of this section which provides a bound for $\gamma(1)$ only in terms of the geometry of $\Omega$ and $W(A)$. In the following theorem $w_{\Omega}(A)= \sup_{z \in \Omega}(|z|)$ and $w_{W(A)} = \sup_{z \in W(A)}(|z|)$.

\begin{Theorem}\label{maingamma}
Let $W(A)$ have non-empty interior. Then for $\gamma$ as given in \eqref{gamma},
    $$
    \gamma(1) \leq - \frac{1}{\pi }\left(\frac{1}{w_{\Omega}(A) + 2w_{W(A)}}\right)^2 \min_{\sigma \in \partial \Omega}\rho (\sigma)   \left( |\partial \Omega| -  | \partial W(A)| \right),
    $$
    where $\rho (\sigma) $ is the radius of curvature at a point $ \sigma \in \partial \Omega$.
\end{Theorem}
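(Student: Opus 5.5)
Since $\gamma(1) = -\int_0^L \lambda_{\min}(\mu(\sigma(s),A))\,ds$, the plan is to bound $\lambda_{\min}$ from below pointwise and then integrate. We assume $W(A)\subseteq\Omega$, as in Lemma \ref{3.1}.

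\textbf{Pointwise bound.} Combining the two parts of Lemma \ref{3.1} gives, for each $\sigma\in\partial\Omega$ with outward unit normal $v(\sigma)$,
$$
\lambda_{\min}(\mu(\sigma,A)) \geq \frac{1}{\pi}\left(\frac{1}{w_{\Omega}(A)+2w_{W(A)}}\right)^2 \bigl(h_\Omega(v(\sigma)) - h_{W(A)}(v(\sigma))\bigr).
$$
Here $r(\sigma)$ is replaced by its lower bound from part 2. This replacement is legitimate because $h_\Omega - h_{W(A)}\geq 0$, which follows from $W(A)\subseteq\Omega$. Integrating over $s\in[0,L]$ gives
$$
-\gamma(1) \geq \frac{1}{\pi}\left(\frac{1}{w_{\Omega}(A)+2w_{W(A)}}\right)^2 \int_0^L \bigl(h_\Omega - h_{W(A)}\bigr)(v(\sigma(s)))\,ds .
$$

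\textbf{Change of variables to the normal angle.} Parametrise $\partial\Omega$ by the angle $\theta$ of the outward normal, $v=e^{i\theta}$. Since $\Omega$ is smooth and convex, the Gauss map $s\mapsto\theta(s)$ is monotone and onto $[0,2\pi]$, with $ds=\rho\,d\theta$, where $\rho$ is the radius of curvature. The integrand is nonnegative, so
$$
\int_0^L (h_\Omega-h_{W(A)})(v)\,ds = \int_0^{2\pi}(h_\Omega-h_{W(A)})(e^{i\theta})\,\rho\,d\theta \geq \min_{\sigma\in\partial\Omega}\rho(\sigma)\int_0^{2\pi}(h_\Omega-h_{W(A)})(e^{i\theta})\,d\theta.
$$
Lemma \ref{per}, applied to both $\Omega$ and the compact convex set $W(A)$, turns the last integral into $|\partial\Omega|-|\partial W(A)|$. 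Multiplying by $-1$ gives the stated inequality.

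\textbf{Main obstacle.} The delicate step is the change of variables. It requires strict convexity so that $\rho$ is defined, and it requires positive curvature; otherwise $\min\rho$ may be $0$, but then the bound is trivially true since $\gamma(1)\le 0$ anyway. One further check is needed: the proof of Lemma \ref{3.1} uses $v^*$ and the half-plane condition with the convention $h(e^{i\theta})=\sup\Re(pe^{-i\theta})$, and this must match so that $\Re(\sigma\bar v)=h_\Omega(v)$ at every boundary point. Finally, $W(A)$ need not be smooth, so Lemma \ref{per} should be applied to it via Cauchy's formula for general convex bodies; that formula holds without smoothness.
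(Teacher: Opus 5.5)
Your proposal matches the paper's proof step for step: the pointwise bound from both parts of Lemma \ref{3.1}, integration over $\partial\Omega$, the substitution $ds=\rho\,d\theta$, pulling out $\min\rho$, and Lemma \ref{per} applied to both $\Omega$ and $W(A)$. You also make explicit two things the paper uses tacitly, namely $W(A)\subseteq\Omega$ and $h_\Omega-h_{W(A)}\ge 0$. One small correction to your side remark: at zero-curvature points $\rho=\infty$, not $0$, so $\min\rho=1/\max\kappa>0$ for a smooth boundary, and the step needs no strict convexity, since $\theta'(s)=\kappa(s)\le 1/\min\rho$ gives $\int_0^L F(\theta(s))\,ds\ge \min\rho\int_0^L F(\theta(s))\,\theta'(s)\,ds=\min\rho\int_0^{2\pi}F(\theta)\,d\theta$ for $F\ge 0$.
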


\begin{proof}
    The previous lemma gives
    $$
    \gamma(1) \leq  - \frac{1}{\pi }\left(\frac{1}{w_{\Omega}(A) + 2w_{W(A)}}\right)^2  \int_0^L \left( h_{\Omega}(v) - h_{W(A)}(v) \right) ds .
    $$
    Write $v(s) = e^{i \theta(s)}$, then Frenet formulas imply $\kappa(s) = |v'(s)| = \theta'(s)$ where $\kappa$ is the (unsigned) curvature, so $ds = \rho (\theta) d \theta$ where $\rho(\theta) = \frac{1}{k(\theta)}$ is the radius of curvature. Thus 
    $$
    \gamma(1) \leq  - \frac{1}{\pi }\left(\frac{1}{w_{\Omega}(A) + 2w_{W(A)}}\right)^2   \min_{\sigma \in \partial \Omega}\rho (\sigma) \int_0^{2 \pi} \left( h_{\Omega}(e^{ i \theta}) - h_{W(A)}(e^{ i \theta}) \right) d \theta .
    $$
    Now the result follows from Lemma \ref{per}.
\end{proof}

There are several proofs in the literature (see for example \cite{1root2,okubo1975constants}) showing that if $\Omega$ is a disc with $W(A)\subseteq \Omega$, then $\Omega$ is a $2$-spectral set for $A$. The result above, combined with Theorem~\ref{main}, yields an explicit improvement in the case when $W(A)\subsetneq \Omega$. It provides a computable constant $K<2$, depending only on the geometry of $W(A)$ and $\Omega$, such that $\Omega$ is a $K$-spectral set for $A$.

\section{Scaled q-Numerical Range as a Spectral Set}\label{4}

In this section for $A \in \mathcal{L}(\mathcal{H})$ such that $A \neq \lambda I$ for any $\lambda \in \mathbb{C}$ we consider the scaled $q$-numerical range $\Omega_q:=\frac{W_q(A)}{q}$ for $0<|q| < 1$ as a spectral set. In general $W_q(A)$ cannot be a spectral set since there are matrices with eigenvalues lying outside $W_q(A)$. By contrast, $W(A)\subseteq \Omega_q$ \cite[p. 381]{gau2021numerical}, and since $W(A)$ is a spectral set $\Omega_q$ will also be a spectral set for $A$. Note that $W_q(A)$ (and hence $\Omega_q$) is convex \cite[p. 381]{gau2021numerical} and the spectrum of $A$ lies in the interior of $\Omega_q$ \cite[Theorem 2.10]{li1994generalized}. Throughout this section, in order to apply the results of previous sections, (where it was assumed $\Omega$ had smooth boundary) we further assume $A$ is such that $\Omega_q$ has smooth boundary, this holds for example when $\mathcal{H}$ is finite dimensional \cite{rajic2005generalized}.

By applying similar arguments to those used in 
\cite[Theorem 2.5]{li1994generalized} 
in the operator setting, it follows that 
$\Omega_{q_1} \subseteq \Omega_{q_2}$ 
whenever $0 < |q_2| \le |q_1| < 1$, it is natural to expect the corresponding spectral constant for $\Omega_q$ to decrease as $|q|$ decreases. 
Furthermore any nontrivial bound must depend not only on $q$ but also on a quantitative measure of the non-normality of $A$. When $\mathcal{H}$ is finite dimensional, the term
\[
\int_0^{2\pi}\sup_{x\in M_\theta}\sqrt{\|Ax\|^2-|\langle Ax,x\rangle|^2}\,d\theta
\]
 where $M_\theta=\arg\max_{\|x\|=1}\Re\langle e^{-i \theta}A x,x\rangle$ plays this role. However $M_\theta$ may be empty when $\mathcal{H}$ is infinite dimensional, so we consider the set of approximate maximisers $M_{\theta, \varepsilon}$ in the theorem below. Moreover, the furthermore part identifies this integral as a variational quantity, describing the initial rate of change of $|\partial \Omega_q|$ as $|q|$ varies. The following theorem generalises the result that the numerical range is a
$1+\sqrt{2}$-spectral set \cite{1root2} by providing a bound for the spectral constant of
the scaled $q$-numerical range $\Omega_q$. The bound depends on $q$ and on a
quantity measuring the non-normality of $A$.

\begin{Theorem}
For $0 < |q| < 1$, let $\Omega_q = \frac{W_q(A)}{q}$ have smooth boundary and let $W(A)$ have non-empty interior. For $\theta\in[0,2\pi]$ and $\varepsilon>0$, define
\[
M_{\theta,\varepsilon}
:=
\left\{
x\in H:\|x\|=1,\ 
\Re\langle e^{-i\theta}Ax,x\rangle
\ge h_{W(A)}(e^{i\theta})-\varepsilon
\right\},
\]
and set
\[
m(\theta)
:=
\lim_{\varepsilon\downarrow 0}
\sup_{x\in M_{\theta,\varepsilon}}
\sqrt{\|Ax\|^2-|\langle Ax,x\rangle|^2}.
\]
Then
\[
\gamma(1)\le
-C\min_{\sigma\in\partial\Omega_q}\rho(\sigma)
\frac{\sqrt{1-|q|^2}}{|q|}
\int_0^{2\pi} m(\theta)\,d\theta,
\]
where
\[
C=\frac1\pi\left(\frac{1}{w_{\Omega_q}(A)+2w_{W(A)}}\right)^2.
\]

Furthermore setting $t \in (0, \infty)$ such that $|q| = \frac{1}{\sqrt{1+t^2}}$ and writing $ \Omega(t) =  \Omega_{q(t)}$ we have 
$$
\int_0^{2 \pi} m(\theta) = \frac{d}{dt}_{|_{t=0+}}\left|\partial\Omega(t) \right|.
$$
\end{Theorem}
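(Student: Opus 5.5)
The plan is to combine Theorem \ref{maingamma} with an explicit formula for the support function of $\Omega_q$ in terms of that of $W(A)$. Applying Theorem \ref{maingamma} with $\Omega=\Omega_q$ gives
\[
\gamma(1)\le -C\min_{\sigma\in\partial\Omega_q}\rho(\sigma)\bigl(|\partial\Omega_q|-|\partial W(A)|\bigr).
\]
By Lemma \ref{per}, the perimeter difference equals
\[
\int_0^{2\pi}\bigl(h_{\Omega_q}(e^{i\theta})-h_{W(A)}(e^{i\theta})\bigr)\,d\theta .
\]
It therefore suffices to prove the pointwise bound
\[
h_{\Omega_q}(e^{i\theta})-h_{W(A)}(e^{i\theta})\ge \frac{\sqrt{1-|q|^2}}{|q|}\,m(\theta).
\]

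\textbf{Pointwise bound.} Fix $\theta$ and $\varepsilon>0$, and take $x\in M_{\theta,\varepsilon}$. Decompose $Ax=\langle Ax,x\rangle x+z$ with $z\perp x$, so that $\|z\|^2=\|Ax\|^2-|\langle Ax,x\rangle|^2$. Assume $z\neq0$; otherwise the bound is trivial. Set $y=\bar q\,x+\sqrt{1-|q|^2}\,u$ with $u\perp x$ a unit vector, so that $\|y\|=1$ and $\langle x,y\rangle=q$. Then
\[
\frac{\langle Ax,y\rangle}{q}=\langle Ax,x\rangle+\frac{\sqrt{1-|q|^2}}{q}\langle z,u\rangle .
\]
Choose $u=\omega z/\|z\|$ with a unimodular $\omega$ that makes $e^{-i\theta}\sqrt{1-|q|^2}\,\bar\omega\|z\|/q$ positive real. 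This gives
\[
h_{\Omega_q}(e^{i\theta})\ge \Re\langle e^{-i\theta}Ax,x\rangle+\frac{\sqrt{1-|q|^2}}{|q|}\|z\|\ge h_{W(A)}(e^{i\theta})-\varepsilon+\frac{\sqrt{1-|q|^2}}{|q|}\|z\|.
\]
Taking the supremum over $M_{\theta,\varepsilon}$ and letting $\varepsilon\downarrow0$ yields the pointwise bound. Integrating over $\theta$ and inserting into the displayed inequality gives the first claim. Measurability of $m$ should follow because $m$ is upper semicontinuous (a decreasing limit of suprema over closed conditions); I expect this to be routine.

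\textbf{The furthermore part.} With $|q|=1/\sqrt{1+t^2}$ we have $\sqrt{1-|q|^2}/|q|=t$. The argument above shows $h_{\Omega(t)}\ge h_{W(A)}+t\,m(\theta)$. For the reverse inequality I would prove
\[
h_{\Omega(t)}(e^{i\theta})=\sup_{\|x\|=1}\Bigl(\Re\langle e^{-i\theta}Ax,x\rangle+t\sqrt{\|Ax\|^2-|\langle Ax,x\rangle|^2}\Bigr).
\]
Every admissible $y$ has the form above with $u\perp x$, and Cauchy–Schwarz gives $\Re(\cdot)\le\|z\|$. Here I treat $q>0$, or absorb the phase of $q$ into the rotation $\theta$. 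Write the supremand as $\phi(x)+t\psi(x)$ with $\psi\le\|A\|$. Any near-maximiser $x_t$ then satisfies $\phi(x_t)\ge h_{W(A)}-2t\|A\|$, so $x_t\in M_{\theta,2t\|A\|}$. Hence
\[
h_{\Omega(t)}\le h_{W(A)}+t\sup_{M_{\theta,2t\|A\|}}\psi ,
\]
and therefore
\[
\frac{h_{\Omega(t)}-h_{W(A)}}{t}\to m(\theta)\quad\text{as } t\downarrow0 .
\]
The difference quotients are bounded by $\|A\|$, so dominated convergence together with Lemma \ref{per} gives
\[
\frac{d}{dt}\Big|_{t=0^+}|\partial\Omega(t)|=\int_0^{2\pi}m(\theta)\,d\theta .
\]
Here $\Omega(0)$ is interpreted as $W(A)$, which is consistent with the support-function formula at $t=0$.

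\textbf{Main obstacle.} The delicate step is this upper bound, in particular making the limit uniform enough to exchange it with the integral. The approximate-maximiser sets $M_{\theta,\varepsilon}$ are designed exactly to handle this in infinite dimensions, and the uniform bound $\psi\le\|A\|$ supplies the domination.
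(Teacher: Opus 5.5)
Your proposal is correct and follows the paper's proof essentially step for step. You reduce via Theorem \ref{maingamma} and Lemma \ref{per} to a pointwise support-function bound, compute $h_{\Omega_q}$ by splitting $y$ into a multiple of $x$ plus an orthogonal unit vector and optimising with Cauchy--Schwarz, restrict to the approximate maximisers $M_{\theta,\varepsilon}$, and move the one-sided derivative under the integral by dominated convergence with the bound $\|A\|$.

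The differences are only cosmetic. You handle complex $q$ directly via $y=\bar q x+\sqrt{1-|q|^2}\,u$; no phase rotation is actually needed, and the paper instead notes $\Omega_q=\Omega_{|q|}$. For the derivative you squeeze between the first part's inequality $h_{\Omega(t)}\ge h_{W(A)}+t\,m(\theta)$ and $\sup_{M_{\theta,2t\|A\|}}\psi$, which is slightly tidier than the paper's separate fixed-$\varepsilon$ and $M_{\theta,\delta t}$ arguments. You also address the measurability of $m$, which the paper omits.
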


\begin{proof}
First observe that $\Omega_q = \Omega_{|q|}$ so it suffices to prove the theorem for $\Omega_{|q|}$. Furthermore, in light of the previous theorem and Lemma \ref{per} it suffices to prove that 
$$
\int_0^{2 \pi} \left( h_{\Omega_{|q|}}(e^{ i \theta}) - h_{W(A)}(e^{ i \theta}) \right) ds   \geq \frac{\sqrt{1-|q|^2}}{|q|} \int_0^{2 \pi} m(\theta).
$$
Fix $\theta\in[0,2\pi]$ and $x \in \mathcal{H}$ with $\|x\|=1$. Then any $ y \in \mathcal{H}$ with $\|y\|=1$ and $\langle x,y\rangle=|q|$ can be written as
$y=|q|x+\sqrt{1-|q|^2}\,z$ with $z\perp x$ and $\|z\|=1$. Hence
\begin{align}
    &\Re\big(e^{-i\theta}\langle Ax,y\rangle\big)
= |q|\,\Re\langle e^{-i \theta}A x,x\rangle
+\sqrt{1-|q|^2}\,\Re\langle e^{-i \theta}A x,z\rangle \\
& \le |q|\,\Re\langle e^{-i \theta}A x,x\rangle + \sqrt{1-|q|^2}\,\|P_{x^\perp}(e^{-i \theta}A x)\|,
\end{align}
where $P_{x^\perp}$ is the projection onto the orthogonal complement of the span of $x$. Clearly equality is attained by choosing $z$ in the direction of $P_{x^\perp}(e^{-i \theta}A x)$ and so
\[
h_{W_{|q|}(A)}(e^{i\theta})
=\sup_{\|x\|=1}\Big(|q|\,\Re\langle e^{-i \theta}A x,x\rangle+\sqrt{1-|q|^2}\,\|P_{x^\perp}(e^{-i \theta}A x)\|\Big),
\]
which means
\begin{equation}\label{star1}
    h_{\Omega_{|q|}}(e^{i\theta})
=\sup_{\|x\|=1}\Big(\Re\langle e^{-i \theta}A x,x\rangle+\frac{\sqrt{1-|q|^2}}{|q|}\,\|P_{x^\perp}(e^{-i \theta}A x)\|\Big).
\end{equation}
For every $\varepsilon>0$, restricting the supremum to $x\in M_{\theta,\varepsilon}$ gives
\[
h_{\Omega_{|q|}}(e^{i\theta})
\ge
\sup_{x\in M_{\theta,\varepsilon}}
\left(
\Re\langle e^{-i\theta}Ax,x\rangle
+
\frac{\sqrt{1-|q|^2}}{|q|}
\|P_{x^\perp}(e^{-i\theta}Ax)\|
\right).
\]
Hence
\begin{align}
    &h_{\Omega_{|q|}}(e^{i\theta})-h_{W(A)}(e^{i\theta}) \\
&\ge
\sup_{x\in M_{\theta,\varepsilon}}
\left(
\Re\langle e^{-i\theta}Ax,x\rangle-h_{W(A)}(e^{i\theta})
+
\frac{\sqrt{1-|q|^2}}{|q|}
\|P_{x^\perp}(e^{-i\theta}Ax)\|
\right).
\end{align}
Since
\[
\Re\langle e^{-i\theta}Ax,x\rangle \ge h_{W(A)}(e^{i\theta})-\varepsilon
\qquad \text{for } x\in M_{\theta,\varepsilon},
\]
it follows that
\[
h_{\Omega_{|q|}}(e^{i\theta})-h_{W(A)}(e^{i\theta})
\ge
-\varepsilon+
\frac{\sqrt{1-|q|^2}}{|q|}
\sup_{x\in M_{\theta,\varepsilon}}
\|P_{x^\perp}(e^{-i\theta}Ax)\|.
\]
Using
\[
\|P_{x^\perp}(e^{-i\theta}Ax)\|^2
=
\|e^{-i\theta}Ax\|^2-|\langle e^{-i\theta}Ax,x\rangle|^2
=
\|Ax\|^2-|\langle Ax,x\rangle|^2,
\]
we obtain
\[
h_{\Omega_{|q|}}(e^{i\theta})-h_{W(A)}(e^{i\theta})
\ge
-\varepsilon+
\frac{\sqrt{1-|q|^2}}{|q|}
\sup_{x\in M_{\theta,\varepsilon}}
\sqrt{\|Ax\|^2-|\langle Ax,x\rangle|^2}.
\]
Letting $\varepsilon\downarrow 0$ yields
\[
h_{\Omega_{|q|}}(e^{i\theta})-h_{W(A)}(e^{i\theta})
\ge
\frac{\sqrt{1-|q|^2}}{|q|}\,m(\theta).
\]
Integrating over $[0,2\pi]$, it follows that
\[
\int_0^{2\pi}
\bigl(h_{\Omega_{|q|}}(e^{i\theta})-h_{W(A)}(e^{i\theta})\bigr)\,d\theta
\ge
\frac{\sqrt{1-|q|^2}}{|q|}
\int_0^{2\pi} m(\theta)\,d\theta.
\]
The desired bound for $\gamma(1)$ now follows from Theorem \ref{maingamma} and Lemma \ref{per}.

We now prove the furthermore statement. From \eqref{star1}, noting that $
t=\frac{\sqrt{1-|q|^2}}{|q|}$, 
we deduce
\begin{align}
&\frac{h_{\Omega(t)}(e^{i\theta})-h_{W(A)}(e^{i\theta})}{t} \\
&=
\sup_{\|x\|=1}
\left(
\frac{\Re\langle e^{-i\theta}Ax,x\rangle-h_{W(A)}(e^{i\theta})}{t}
+
\|P_{x^\perp}(e^{-i\theta}Ax)\|
\right).
\end{align}
Fix $\varepsilon>0$. If $x\notin M_{\theta,\varepsilon}$, then $
\Re\langle e^{-i\theta}Ax,x\rangle-h_{W(A)}(e^{i\theta})<-\varepsilon$, 
and hence
\[
\frac{\Re\langle e^{-i\theta}Ax,x\rangle-h_{W(A)}(e^{i\theta})}{t}
+
\|P_{x^\perp}(e^{-i\theta}Ax)\|
<
-\frac{\varepsilon}{t}+\|A\|.
\]
Thus, for sufficiently small $t$, the supremum may be restricted to vectors
$x\in M_{\theta,\varepsilon}$, and so
\[
\frac{h_{\Omega(t)}(e^{i\theta})-h_{W(A)}(e^{i\theta})}{t}
\le
\sup_{x\in M_{\theta,\varepsilon}}
\|P_{x^\perp}(e^{-i\theta}Ax)\|.
\]
Taking $\limsup$ as $t\downarrow0$ gives
\[
\limsup_{t\downarrow0}
\frac{h_{\Omega(t)}(e^{i\theta})-h_{W(A)}(e^{i\theta})}{t}
\le
\sup_{x\in M_{\theta,\varepsilon}}
\|P_{x^\perp}(e^{-i\theta}Ax)\|.
\]
Since this holds for every $\varepsilon>0$, it follows that
\begin{equation}\label{eq1}
    \limsup_{t\downarrow0}
\frac{h_{\Omega(t)}(e^{i\theta})-h_{W(A)}(e^{i\theta})}{t}
\le
m(\theta).
\end{equation}

Let $\delta>0$. If $x\in M_{\theta,\delta t}$, then $
\Re\langle e^{-i\theta}Ax,x\rangle-h_{W(A)}(e^{i\theta})\ge -\delta t$ and so
\[
\frac{h_{\Omega(t)}(e^{i\theta})-h_{W(A)}(e^{i\theta})}{t}
\ge
-\delta+\|P_{x^\perp}(e^{-i\theta}Ax)\|.
\]
Taking the supremum over $x\in M_{\theta,\delta t}$ yields
\[
\frac{h_{\Omega(t)}(e^{i\theta})-h_{W(A)}(e^{i\theta})}{t}
\ge
-\delta+
\sup_{x\in M_{\theta,\delta t}}
\|P_{x^\perp}(e^{-i\theta}Ax)\|.
\]
Taking $\liminf$ as $t\downarrow0$, we obtain
\[
\liminf_{t\downarrow0}
\frac{h_{\Omega(t)}(e^{i\theta})-h_{W(A)}(e^{i\theta})}{t}
\ge
-\delta+m(\theta).
\]
Since $\delta>0$ is arbitrary, it follows that
\begin{equation}\label{eq2}
    \liminf_{t\downarrow0}
\frac{h_{\Omega(t)}(e^{i\theta})-h_{W(A)}(e^{i\theta})}{t}
\ge
m(\theta).
\end{equation}
Combining \eqref{eq1} and \eqref{eq2} gives
\[
\frac{d}{dt}\bigg|_{t=0+} h_{\Omega(t)}(e^{i\theta})=m(\theta).
\]

Observe that
\[
\|P_{x^\perp}(e^{-i\theta}Ax)\|^2
=
\|e^{-i\theta}Ax\|^2-|\langle e^{-i\theta}Ax,x\rangle|^2
=
\|Ax\|^2-|\langle Ax,x\rangle|^2,
\]
so
\[
0\le
\frac{h_{\Omega(t)}(e^{i\theta})-h_{W(A)}(e^{i\theta})}{t}
\le
\sup_{\|x\|=1}\|P_{x^\perp}(e^{-i\theta}Ax)\|
\le \|A\|.
\]
Thus the dominated convergence theorem allows us to integrate the derivative identity,
and by Lemma \ref{per} we obtain
\[
\int_0^{2\pi} m(\theta)\,d\theta
=
\frac{d}{dt}\bigg|_{t=0+} |\partial\Omega(t)|.
\]
\end{proof}
We remark that when $\mathcal{H}$ is finite dimensional, the $m(\theta)$ in the theorem above can be replaced with $\sup_{x\in M_\theta}\sqrt{\|Ax\|^2-|\langle Ax,x\rangle|^2}$. Furthermore when $q=1$ the set $\Omega_q$ reduces to the numerical range $W(A)$ and the
result recovers the $1+\sqrt{2}$ spectral constant bound.

The following conjecture generalises Crouzeix's Conjecture from the classical numerical range to (scaled) $q$-numerical ranges.

\begin{Conjecture}
    Let $A$ be a $n \times n$ matrix, $0 < |q| \leq 1$ and $p$ a polynomial with complex coefficients. Then 
    $$
    \|p(A)\| \leq \max\!\left( 1,\,
\frac{2|q|}{\,1+\sqrt{1-|q|^{2}}\,} \right) \sup_{z \in \Omega_q}|p(z)|.
    $$
\end{Conjecture}
The term $1$ in the maximum arises because a spectral constant is always at least 1, which follows by testing with the constant polynomial $p(z)=1$.

\newpage

\bibliographystyle{plain}
\bibliography{bibliography.bib}
\end{document}